\theoremstyle{theorem}
\newtheorem{theorem}{Theorem}
\theoremstyle{corollary}
\theoremstyle{definition}
\def\qed{\hfill $\square$}
\begin{document}

\title{A combinatorial identity with applications to forest graphs.}
\author{Tony Dorlas\thanks{Dublin Institute for Advanced Studies,
School of Theoretical Physics, Dublin, Ireland},\and Alexei Rebenko\thanks{Institute of Mathematics, Ukrainian National Academy of Sciences, Kyiv, Ukraine}, \and Baptiste Savoie\footnotemark[1]\,\,$^{,}$\thanks{Corresponding author - e-mail: baptiste.savoie@gmail.com}}
\date{}

\maketitle

\begin{abstract}
\noindent We give an elementary proof of an interesting combinatorial identity which is of particular interest in graph theory and its applications. Two applications to enumeration of forests with closed-form expressions are given.
\end{abstract}
\vspace{0.5cm}

\noindent The aim of this paper is to give a new, elementary proof of the combinatorial identity \eqref{fdeq1} in Theorem \ref{thm} and give some non-trivial applications to enumeration of forests. Done by induction, the proof we give is simple in that it only requires the use of the binomial formula along with the derivation operator. This identity finds an interest in graph theory (for enumeration of forests) and its applications. For instance, we came across \eqref{fdeq1} when evaluating, within the framework of rigorous statistical mechanics, contributions from forest graphs to a cluster expansion for classical gas correlation functions in \cite{DRS}. Other applications of such rooted graphs in theoretical physics are for example in the works \cite{GalNic1,GalNic2,DunIagS,DunS}. The combinatorial identity \eqref{fdeq1} provides a means to directly derive a closed-form expression for the number of distinct rooted forests on a fixed collection of sets of vertices, see formula \eqref{fdeq2} in Theorem \ref{thm2}.  A more complicated situation involving additional vertices with different weighting factors is also treated, see formula \eqref{Qnm} in Theorem \ref{thm3}. The identity \eqref{fdeq1} is in fact known: see \cite{Stan}, Theorem 5.3.4, Equation (5.47). However, it is formulated and proved there in terms of forest graphs, and thereby not easily recognizable. Here we present a direct and self-contained proof. We also note that formula \eqref{fdeq1} is similar to 
Hurwitz' generalization of Abel's formula, see, e.g., \cite{Abel,Hurwitz,Pitman}.

\section{The combinatorial identity.}

\begin{theorem}
\label{thm}
Given $m,p \in \mathbb{N}$ with $1\leq p \leq m$, and a collection $(x_{i})_{i=1}^{m}$ of
(complex) numbers $x_{i} \in \mathbb{C}$, the following identity holds
\begin{equation}
\label{fdeq1}
\sum_{\{I_{1},\dots,I_{p}\} \in \Pi_p(\{1,\dots,m\})} \prod_{j=1}^{p} \left(\sum_{i \in I_{j}}
x_i\right)^{\vert I_{j} \vert -1} = {m-1 \choose p-1} \left(\sum_{i=1}^{m} x_i \right)^{m-p},
\end{equation}
where the sum on the left-hand side is over the set $\Pi_p(\{1, \dots,m\})$ of all partitions $(I_{j})_{j=1}^{p}$ of $\{1,\dots,m\}$ into $p$ non-empty subsets.
\end{theorem}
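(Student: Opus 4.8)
The plan is to prove the identity by induction on $m$, with the case $p = m$ trivial (every block is a singleton, the left side is $1$, and $\binom{m-1}{m-1} (\sum x_i)^0 = 1$). For the inductive step, the key idea is to single out the block containing the element $m$ (or, symmetrically, to treat one distinguished variable separately). Write $S = \sum_{i=1}^m x_i$. Each partition into $p$ blocks is obtained by choosing the block $I$ containing $m$, say $I = J \cup \{m\}$ with $J \subseteq \{1,\dots,m-1\}$ of size $k-1$ (so $|I| = k$), and then partitioning the remaining $m - k$ elements into $p - 1$ nonempty blocks. This gives
\[
\text{LHS} = \sum_{k=1}^{m-p+1} \sum_{\substack{J \subseteq \{1,\dots,m-1\} \\ |J| = k-1}} \left(x_m + \sum_{i \in J} x_i\right)^{k-1} \sum_{\{I_1,\dots,I_{p-1}\} \in \Pi_{p-1}(\{1,\dots,m-1\}\setminus J)} \prod_{j=1}^{p-1}\left(\sum_{i \in I_j} x_i\right)^{|I_j|-1}.
\]
By the induction hypothesis the inner partition sum over a ground set of size $m - k$ equals $\binom{m-k-1}{p-2}\big(\sum_{i \notin J, i \neq m} x_i\big)^{m-k-p+1}$, which is $\binom{m-k-1}{p-2}(S - x_m - \sum_{i\in J} x_i)^{m-k-p+1}$.

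The next step is to evaluate the resulting sum over subsets $J$ of a fixed size. Setting $t = x_m + \sum_{i \in J} x_i$, each term has the shape $t^{k-1}(S - t)^{m-k-p+1}$ summed against $\binom{m-k-1}{p-2}$ over all $J$ of size $k-1$; here the natural tool is to introduce the derivation operator as advertised in the introduction. Concretely, I would use the polynomial identity $\sum_{J : |J| = k-1} (x_m + \sum_{i \in J} x_i)^{k-1}(S - x_m - \sum_{i\in J} x_i)^{m-k-p+1}$ can be handled by expanding one of the two powers with the binomial formula and recognizing elementary-symmetric-type sums, or — cleaner — by differentiating the generating identity $\sum_{J} e^{t(x_m + \sum_{i \in J}x_i)} = e^{t x_m}\prod_{i=1}^{m-1}(1 + e^{t x_i})$-type expressions and extracting coefficients. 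Either route reduces the double sum to a single sum over $k$ of the form $\sum_k \binom{m-1}{k-1}\binom{m-k-1}{p-2} (\text{power of } S)$, and a Vandermonde-type convolution collapses this to $\binom{m-1}{p-1} S^{m-p}$, which is exactly the right-hand side.

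The main obstacle I anticipate is the subset sum $\sum_{J:|J|=k-1}(x_m + \sum_{i\in J} x_i)^{k-1}(S_{m-1} - \sum_{i\in J} x_i)^{m-k-p+1}$, where $S_{m-1} = S - x_m$: this is not symmetric in an obvious closed form for general $x_i$, and it is tempting to expect it to depend on more than just $S$. The resolution — and the crux of why the identity is true — is that after multiplying by the correct binomial coefficient and summing over $k$, all the "non-symmetric" contributions telescope or cancel, leaving only a power of the total sum $S$. To make this rigorous I would most likely first establish an auxiliary lemma: for indeterminates $y_1,\dots,y_n$ with $Y = \sum y_i$ and integers $a,b \geq 0$,
\[
\sum_{\ell=0}^{n} \binom{n-\ell}{?}\!\!\sum_{\substack{J \subseteq \{1,\dots,n\}\\|J|=\ell}}\Big(\sum_{i\in J} y_i\Big)^{a}\Big(Y - \sum_{i\in J}y_i\Big)^{b} = (\text{const})\, Y^{a+b},
\]
proved by induction on $n$ using the binomial theorem to split off $y_n$, and then apply it with the bookkeeping above. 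If that auxiliary step proves delicate, the fallback is to carry out the whole induction directly on the differentiated form, treating $x_m$ as the variable with respect to which one differentiates $p - 1$ times — this is presumably the "derivation operator" argument the authors have in mind, and it sidesteps the need to identify any intermediate symmetric function explicitly.
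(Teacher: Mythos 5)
Your decomposition --- peeling off the entire block containing $m$ and applying the induction hypothesis to the remaining $p-1$ blocks --- is legitimate, but it reduces the theorem to the subset-sum identity
\[
\sum_{k=1}^{m-p+1}\binom{m-k-1}{p-2}\sum_{\substack{J\subseteq\{1,\dots,m-1\}\\ |J|=k-1}}\Bigl(x_m+\sum_{i\in J}x_i\Bigr)^{k-1}\Bigl(S-x_m-\sum_{i\in J}x_i\Bigr)^{m-k-p+1}=\binom{m-1}{p-1}S^{m-p},
\]
which is an Abel--Hurwitz-type identity essentially as hard as the theorem itself, and this is exactly where your proposal stops being a proof. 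Your intermediate claim that the sum over $J$ of fixed size collapses, via a Vandermonde convolution, to $\binom{m-1}{k-1}$ times a power of $S$ is false: already for $m=3$, $p=2$ the $k=1$ contribution is $x_1+x_2=S-x_3$ and the $k=2$ contribution is $S+x_3$; only the total over $k$ is a multiple of $S$. You correctly flag this as ``the crux of why the identity is true,'' but the auxiliary lemma you then sketch is not even fully stated (the binomial weight carries a question mark), and none of the three alternative strategies you list (binomial expansion, exponential generating functions, differentiating in $x_m$) is carried out. As written, the proposal defers the entire content of the theorem to an unproven lemma.

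For comparison, the paper's induction avoids this trap by removing a single \emph{element} rather than a whole block: assuming $1\in I_1$, it expands only the factor $\bigl(\sum_{i\in I_1}x_i\bigr)^{\vert I_1\vert-1}$ by the binomial formula in powers of $x_1$, re-indexes the resulting partitions of $\{2,\dots,m+1\}$ (noting that $1$ may be adjoined to any of the $p$ blocks of a partition of $\{2,\dots,m+1\}$, or form a block on its own), and matches the coefficient of $x_1^n$ on both sides. Each coefficient identity is then deduced from the $m$-variable induction hypothesis by applying the operator $\sum_{k=1}^{m}\partial^{n-1}/\partial x_k^{n-1}$ --- summed over \emph{all} variables, not iterated in one distinguished variable as your fallback suggests --- to both sides of the already-established closed-form identity, which is an elementary computation. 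If you wish to salvage your route you would need an independent proof of the displayed Hurwitz-type identity, and the natural way to get one (expanding in powers of $x_m$ and differentiating) essentially reconstructs the paper's argument.
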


The proof of Theorem \ref{thm} is postponed to Sec. \ref{app}.

\section{Two applications to enumeration of forests.}
\label{app}

A \textit{rooted (or directed) forest} is defined on a collection $(V_{i})_{i=1}^{m}$ of sets of vertices $V_{i}$ as a graph on
$V = V_{1} \cup \cdots \cup V_{m}$ with connected components which are \textit{trees}, such that there are no lines between vertices of any individual $V_{j}$, and moreover, if each set $V_j$ is reduced to a single point and lines eminating from it to a single line, then the graph reduces to a single (connected) tree. Further, if in the reduced tree, $V_j$ ($j \geq 2$) is connected to $V_i$ in the path from $V_j$ to $V_1$, then all lines between $V_i$ and $V_j$ eminate from a single vertex in $V_i$ (a \textit{root}). This definition may appear complicated, but in fact in many situations such directed forests occur naturally. See, e.g., \cite{GalNic1,GalNic2,DunIagS, DunS,DRS,Pitman}. Deriving a closed-form expression for the number of (distinct) forests as described above becomes straightforward by using Theorem \ref{thm}. Note that a direct proof of Theorem~\ref{thm2} was given in \cite{DRS}.

\begin{theorem}
\label{thm2}
Given $m \in \mathbb{N}$, the number $N_{m}$ of distinct forests on a collection of sets of vertices $(V_{i})_{i=1}^{m}$  is given by
\begin{equation}
\label{fdeq2}
N_{m} = \vert V_{1}\vert \left(\sum_{i=1}^{m} \vert V_{i}\vert\right)^{m-2} \prod_{i=2}^{m} \left(2^{\vert V_{i} \vert}-1\right).
\end{equation}
\end{theorem}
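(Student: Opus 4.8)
The plan is to prove Theorem~\ref{thm2} by induction on $m$, using Theorem~\ref{thm} at the inductive step. Throughout write $n_i = |V_i|$ and $M = \sum_{i=1}^m n_i$, and for a subset $J \subseteq \{1,\dots,m\}$ put $M_J = \sum_{i\in J} n_i$. The base case $m=1$ is immediate: the only forest on $(V_1)$ is the edgeless graph, so $N_1=1$, which agrees with \eqref{fdeq2} under the reading $n_1 M^{-1}=1$ (note $M=n_1$ when $m=1$). The formula to be proved is understood for an arbitrary collection of vertex sets one of which is designated as the root set, the role of $V_1$ being played by that designated set; Theorem~\ref{thm2} as stated is the special case where the root set is listed first.

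For the inductive step, fix $m\geq 2$ and assume \eqref{fdeq2} holds for every collection of at most $m-1$ vertex sets. Given a forest $F$ on $(V_i)_{i=1}^m$, consider the reduced tree rooted at the $V_1$-node; let $j_1,\dots,j_s$ ($s\geq 1$) be its children and let $J_l\subseteq\{2,\dots,m\}$ consist of $j_l$ together with the indices of the sets descending from it, so that $\{J_1,\dots,J_s\}$ is a partition of $\{2,\dots,m\}$. Deleting from $F$ all vertices of $V_1$ and all incident edges leaves, on each sub-collection $(V_i)_{i\in J_l}$, a forest $F_l$ with root set $V_{j_l}$, and the deleted edges are exactly those joining a single vertex $r_{j_l}\in V_1$ to a non-empty subset $S_{j_l}\subseteq V_{j_l}$, one such pair $(r_{j_l},S_{j_l})$ for each $l$. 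I would verify that $F\mapsto\bigl(\{(J_l,j_l)\}_{l=1}^s,(r_{j_l})_l,(S_{j_l})_l,(F_l)_l\bigr)$ is a bijection onto all such data, with the $r_{j_l}\in V_1$, the non-empty $S_{j_l}\subseteq V_{j_l}$ and the forests $F_l$ chosen freely and independently. Counting the data on the right and invoking the induction hypothesis for each $F_l$ (which involves $|J_l|\leq m-1$ sets) then gives
\begin{equation*}
N_m = \sum_{s=1}^{m-1}\ \sum_{\{J_1,\dots,J_s\}\in\Pi_s(\{2,\dots,m\})}\ \prod_{l=1}^{s}\ \sum_{j_l\in J_l} n_1\,(2^{n_{j_l}}-1)\, n_{j_l}\, M_{J_l}^{|J_l|-2}\prod_{k\in J_l\setminus\{j_l\}}(2^{n_k}-1).
\end{equation*}

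Now I would simplify and apply Theorem~\ref{thm}. Since the $J_l$ partition $\{2,\dots,m\}$, the subset factors collect into $\prod_{k=2}^m(2^{n_k}-1)$, the factors $n_1$ collect into $n_1^{s}$, and the innermost sum produces $\sum_{j_l\in J_l} n_{j_l}M_{J_l}^{|J_l|-2}=M_{J_l}^{|J_l|-1}$, whence
\begin{equation*}
N_m = \Big(\prod_{k=2}^m(2^{n_k}-1)\Big)\sum_{s=1}^{m-1} n_1^{\,s}\sum_{\{J_1,\dots,J_s\}\in\Pi_s(\{2,\dots,m\})}\ \prod_{l=1}^{s} M_{J_l}^{|J_l|-1}.
\end{equation*}
Applying Theorem~\ref{thm} with ground set $\{2,\dots,m\}$ (of cardinality $m-1$), with $p=s$, and with $x_i=n_i$, the inner double sum equals $\binom{m-2}{s-1}(M-n_1)^{m-1-s}$. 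Reindexing $t=s-1$ and using the binomial theorem, $\sum_{s=1}^{m-1} n_1^{\,s}\binom{m-2}{s-1}(M-n_1)^{m-1-s}=n_1\sum_{t=0}^{m-2}\binom{m-2}{t}n_1^{\,t}(M-n_1)^{m-2-t}=n_1 M^{m-2}$, which yields \eqref{fdeq2} and closes the induction.

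I expect the main obstacle to be the combinatorial bookkeeping of the second paragraph: checking carefully that deleting the root set $V_1$ really does set up a bijection between forests $F$ and the listed data, and in particular that each $S_{j_l}$ ranges over all non-empty subsets of $V_{j_l}$ independently of $F_l$, that each residual $F_l$ is again a forest in the sense of the definition with root set $V_{j_l}$, and that nothing is lost or double-counted across the parts. Once this step is in place, the rest is the routine algebra above: a single application of Theorem~\ref{thm} followed by the binomial theorem.
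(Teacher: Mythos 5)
Your proof is correct, and it follows the same overall strategy as the paper --- induction on $m$, deletion of one vertex set to break the forest into components counted by the induction hypothesis, then one application of Theorem \ref{thm} followed by the binomial theorem --- but the decomposition itself is genuinely different. The paper peels off the \emph{last} set $V_{m+1}$: removing it leaves $p$ components forming a partition $\{I_1,\dots,I_p\}$ of the full index set $\{1,\dots,m\}$, the part containing $1$ is treated specially (it keeps the original root $V_1$ and carries an extra factor $\sum_{i\in I_1}|V_i|$ for the choice of the vertex attaching to $V_{m+1}$), and Theorem \ref{thm} is applied on $\{1,\dots,m\}$ together with $\sum_{p=1}^{m}\binom{m-1}{p-1}|V_{m+1}|^{p-1}\bigl(\sum_{i=1}^m|V_i|\bigr)^{m-p}=\bigl(\sum_{i=1}^{m+1}|V_i|\bigr)^{m-1}$. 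You instead peel off the \emph{root} set $V_1$, so the components are exactly the child subtrees, giving a partition of $\{2,\dots,m\}$ with no distinguished part, and Theorem \ref{thm} is applied on the $(m-1)$-element ground set $\{2,\dots,m\}$ before the final binomial summation $\sum_{s} n_1^{\,s}\binom{m-2}{s-1}(M-n_1)^{m-1-s}=n_1M^{m-2}$. Your bookkeeping is arguably cleaner, since every component is handled uniformly and the bijection you flag as the main obstacle is exactly the one the paper uses without further comment; the paper's choice has the advantage that the deleted set is the one just added in passing from $m$ to $m+1$, which makes the inductive structure more transparent. Both arguments require, as you correctly note, that \eqref{fdeq2} be read with an arbitrary designated root set; the paper uses this tacitly when it sums over the root of each branch $I_r$, $r\geq 2$.
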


\begin{proof} Set
\begin{equation*}
\rho_{i} := 2^{\vert V_{i}\vert} - 1,\quad i=1,\dots,m.
\end{equation*}
For $m=2$ we can choose the root in $\vert V_{1}\vert$ ways and connect it to $k=1,\dots,\vert V_{2}\vert$ points in $V_{2}$. This results in $\vert V_{1}\vert \rho_2$ different forests (in fact, disregarding isolated vertices, they are trees). Suppose now that the statement is true for $m$. Considering a forest on $m+1$ sets of vertices, the vertices of $V_{m+1}$ can be connected to $p=1,\dots,m$ other sets $V_{j}$ in the reduced tree on $V_{1},\dots,V_{m}$. Omitting the connections to $V_{m+1}$ we obtain $p$ separate forests given by subsets $I_{1},\dots,I_{p}$ of $\{1,\dots,m\}$. If $I_{1}$ contains $1$, then this yields a factor
\begin{equation*}
\vert V_{1}\vert \left(\prod_{i \in I_1\setminus\{1\}} \rho_i \right) \left(\sum_{i \in I_{1}} \vert V_{i}\vert\right)^{\vert I_{1}\vert -2} \left(\sum_{i \in I_{1}} \vert V_{i}\vert\right)\rho_{m+1} ,
\end{equation*}
the additional factor $\left(\sum_{i \in I_{1}} \vert V_{i} \vert\right)$ being due to the choice of root for the connection to $V_{m+1}$. Similarly, the other branches yield factors
\begin{equation*}
\vert V_{m+1}\vert \left(\sum_{i \in I_r} \vert V_{i} \vert\right)^{\vert I_{r}\vert-1}  \prod_{i \in I_{r}} \rho_i, \quad r=2,\dots,m.
\end{equation*}
The resulting expression for $N_{m+1}$ is
\begin{equation*}
\vert V_{1}\vert \sum_{p=1}^m \vert V_{m+1}\vert^{p-1} \sum_{\{I_1,\dots,I_p\} \in \Pi_{p}(\{1,\dots,m\})} \prod_{r=1}^p
\left(\left(\sum_{i \in I_{r}} \vert V_{i} \vert\right)^{\vert I_r\vert-1} \prod_{\substack{i \in I_r \\ i \neq 1}}\rho_i\right)\rho_{m+1}.
\end{equation*}
Identity \eqref{fdeq1} yields
\begin{equation*}
\begin{split}
N_{m+1}&= \vert V_{1}\vert \left(\sum_{p=1}^{m}  {m-1 \choose p-1} \vert V_{m+1}\vert^{p-1}
\left(\sum_{i=1}^m \vert V_{i}\vert \right)^{m-p}\right) \prod_{i=2}^{m+1} \rho_i \\
&= \vert V_{1}\vert \left(\sum_{i=1}^{m+1} \vert V_{i}\vert\right)^{m-1} \prod_{i=2}^{m+1} \rho_i.
\end{split}
\end{equation*}
This concludes the proof of Theorem \ref{thm2}.
\end{proof}

Another, more complicated application is considered in Theorem \ref{thm3} below. We refer the readers to Figure \ref{fig1} in Sec. \ref{grph} for an illustration of a generic configuration.

\begin{theorem}
\label{thm3}
Given $m, n \in \mathbb{N}$, let $(V_i)_{i=1}^m$ be a collection of sets of vertices and let $v_1,\dots,v_n$ be $n$ additional single vertices. Define a quantity $Q_n(m)$ as follows. Every forest on $\bigcup_{i=1}^m V_i \cup\bigcup_{j=1}^n \{v_j\}$ gives a contribution to $Q_n(m)$ defined by positive constants $a$ and $\lambda$ such that every edge between two additional vertices $v_k$ and $v_l$ contributes a factor $a$ whereas all other edges contribute a factor $\lambda$. Then
\begin{equation}
\label{Qnm}
Q_n(m) = \vert V_1\vert \,\lambda \left(\prod_{i=2}^m \rho_i(\lambda)\right) (\vert V\vert  + n)^{m-1}
(\lambda \vert V\vert  + n a)^{n-1},
\end{equation}
where $V := \bigcup_{i=1}^m V_i$ and $\rho_i(\lambda) := (1+\lambda)^{\vert V_i\vert }-1$, $i=1,\dots,m$.
\end{theorem}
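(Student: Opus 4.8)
The plan is to strip \eqref{Qnm} down to a single identity about weighted rooted trees and then prove that identity, broadly mirroring the induction used for Theorem~\ref{thm2}.

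\textit{Step 1 (reduction).} Write $x_i:=\vert V_i\vert$, $x:=\vert V\vert=\sum_{i=1}^m x_i$ and $\alpha:=a/\lambda$. I would expand $Q_n(m)$ over the reduced tree $T$ on the super-vertices $\{V_1,\dots,V_m,v_1,\dots,v_n\}$, rooted at $V_1$. An edge of $T$ from a parent $P$ to a child $N$ contributes the choice of a root vertex in $P$ (a factor $x_i$ if $P=V_i$, a factor $1$ if $P$ is a $v$-vertex) times the weighted choice of the non-empty set of endpoints reached in $N$ (a factor $\sum_{\emptyset\ne S\subseteq V_j}\lambda^{\vert S\vert}=\rho_j(\lambda)$ if $N=V_j$; a factor $\lambda$ if $N$ is a $v$-vertex with a $V$-parent; a factor $a$ if $N$ is a $v$-vertex with a $v$-parent). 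Collecting factors by parent and by child, and using that every $v$-vertex has exactly one parent-edge, gives
\[
Q_n(m)=\lambda^{\,n}\Big(\prod_{i=2}^m\rho_i(\lambda)\Big)\,\widetilde S,\qquad
\widetilde S:=\sum_{T}\ \prod_{i=1}^m x_i^{\,c_i(T)}\,\alpha^{\,e_{vv}(T)},
\]
the sum over all trees $T$ on the super-vertices rooted at $V_1$, where $c_i(T)$ is the number of children of $V_i$ and $e_{vv}(T)$ the number of edges joining two $v$-vertices. Since $\lambda^{\,n}(x+n\alpha)^{n-1}=\lambda\,(\lambda x+na)^{n-1}$, assertion \eqref{Qnm} is equivalent to
\[
\widetilde S=x_1\,(x+n)^{m-1}(x+n\alpha)^{n-1}.\qquad(\star)
\]

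\textit{Step 2 (the $v$-components).} To prove $(\star)$ I would group the $v$-vertices into the connected components $C_1,\dots,C_s$ of the subgraph of $T$ spanned by the $v$-vertices, so that $T$ corresponds bijectively to: (a) the partition $\{C_1,\dots,C_s\}$ of $\{v_1,\dots,v_n\}$ together with a rooted tree on each $C_\ell$ (rooted at the vertex of $C_\ell$ nearest $V_1$); (b) the contracted tree $\widehat T$ on $\{V_1,\dots,V_m,C_1,\dots,C_s\}$ rooted at $V_1$, in which each $C_\ell$ has a $V$-parent and no two of the $C_\ell$ are adjacent; (c) for each $V$-child of a $C_\ell$ in $\widehat T$, the choice of an attachment vertex inside $C_\ell$. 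Under this bijection $c_i(T)$ equals the number of children of $V_i$ in $\widehat T$, while $e_{vv}(T)=n-s$; the weight therefore factorises, and summing over (a)--(c) yields
\[
\widetilde S=\sum_{s\ge1}\alpha^{\,n-s}\!\!\sum_{\substack{\{C_1,\dots,C_s\}\\ \text{partition of }[n]}}\!\!\Big(\prod_{\ell=1}^{s}\vert C_\ell\vert^{\,\vert C_\ell\vert-1}\Big)\,L_s,\qquad
L_s:=\sum_{\widehat T}\ \prod_{i=1}^m x_i^{\,\widehat c_i}\ \prod_{\ell=1}^{s}\vert C_\ell\vert^{\,\widehat d_\ell},
\]
the inner sum being over trees $\widehat T$ rooted at $V_1$ in which $\{C_1,\dots,C_s\}$ is an independent set ($\widehat c_i,\widehat d_\ell$ denoting numbers of children). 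Then I would evaluate $L_s=x_1\,(x+n)^{m-1}\,x^{\,s-1}$ — it depends on the $C_\ell$ only through $\sum_\ell\vert C_\ell\vert=n$ — via the weighted matrix--tree theorem: $L_s$ is a cofactor of the weighted Laplacian of the relevant directed complete graph, and after deleting the row and column of $V_1$ that matrix is a diagonal matrix minus a rank-$2$ correction, so its determinant collapses, through the matrix-determinant lemma, to an explicit $2\times2$ determinant that produces the stated value.

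\textit{Step 3 (assembly).} With $L_s$ in hand, $(\star)$ is immediate: the sum $\sum\prod_\ell\vert C_\ell\vert^{\vert C_\ell\vert-1}$ over partitions of $[n]$ into $s$ blocks is exactly the left-hand side of Theorem~\ref{thm} with all $x_i=1$, $m$ replaced by $n$ and $p=s$, hence equals $\binom{n-1}{s-1}n^{\,n-s}$, and therefore
\[
\widetilde S=x_1(x+n)^{m-1}\sum_{s=1}^{n}\binom{n-1}{s-1}(n\alpha)^{\,n-s}x^{\,s-1}=x_1(x+n)^{m-1}(x+n\alpha)^{n-1}
\]
by the binomial theorem; reinstating the factor $\lambda^{\,n}\prod_{i=2}^m\rho_i(\lambda)$ from Step~1 gives \eqref{Qnm}. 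The main obstacle will be twofold: pinning down the bijection in Step~2 (in particular, checking that "$\widehat T$ with the $C_\ell$ independent" is exactly the right book-keeping and that $c_i(T)$ is unchanged by the contraction), and the evaluation of $L_s$, which is the only genuinely new enumerative input and the one point that reaches beyond Theorem~\ref{thm} — it can alternatively be proved by induction on $s$ (delete one $C$-vertex and re-sum over how its pieces reattach), staying within Theorem~\ref{thm} at the cost of a longer argument. Everything else is the binomial formula plus Theorem~\ref{thm}, as in the proof of Theorem~\ref{thm2}. One can even bypass Step~2 and apply the weighted matrix--tree theorem directly to $\widetilde S$: its reduced Laplacian is again diagonal-minus-rank-$2$ and the same short computation returns $(\star)$ — the quickest route, though it does not use Theorem~\ref{thm}.
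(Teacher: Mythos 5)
Your proposal is correct, but it takes a genuinely different route from the paper. The paper proves Theorem~\ref{thm3} by induction on $m$: the base case $m=1$ is handled with Theorem~\ref{thm}, and the inductive step hinges on an auxiliary quantity ${\tilde Q}_n(m)$ (the contribution of forests with no direct edge between distinct $V_i$'s), whose closed form \eqref{Qtilde} is itself established through the rather heavy combinatorial rewriting \eqref{Qtilderesult}, proved by a separate induction in the appendix; the general case is then recovered by clustering the $V_i$ into connected groups $J_1,\dots,J_q$ and applying Theorem~\ref{thm} once more, as in Theorem~\ref{thm2}. You instead factor the weight of a forest over the reduced tree on the $m+n$ super-vertices, peel off $\lambda^n\prod_{i\ge2}\rho_i(\lambda)$, and reduce \eqref{Qnm} to the single weighted arborescence identity $(\star)$; I checked your bookkeeping in Step~1 (each $V_j$, $j\ge2$, contributes $\rho_j(\lambda)$ for its parent bundle irrespective of the parent's type, each $V_i$ contributes $\vert V_i\vert$ per child, and the $v$-parent edges give $\lambda^{n-e_{vv}}a^{e_{vv}}$) and it is right. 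Your Step~2 bijection is also sound: a component $C_\ell$ of the $v$-induced subgraph of a tree attaches to the rest at a unique $V$-parent, no two components are adjacent, and the contraction preserves the child counts $c_i$. The one piece you defer, $L_s=\vert V_1\vert(\vert V\vert+n)^{m-1}\vert V\vert^{s-1}$, does come out as claimed: the reduced weighted Laplacian is diagonal minus a rank-$2$ matrix (rows of $V$-type are $x_i\mathbf{1}^T$, rows of $C$-type are $\vert C_\ell\vert(\mathbf{1}_V^T,0)$), and the matrix-determinant lemma collapses it to a $2\times2$ determinant equal to $\vert V_1\vert/\vert V\vert$; the same computation applied directly to $(\star)$ gives $x_1/(x+n\alpha)$ and hence \eqref{Qnm} in one stroke. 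The trade-off: your argument is shorter and avoids the double induction and the opaque formula \eqref{Qtilderesult}, but it imports the weighted matrix--tree theorem for arborescences, which lies outside the paper's deliberately elementary toolkit (binomial formula plus Theorem~\ref{thm}); indeed your ``quickest route'' bypasses Theorem~\ref{thm} entirely, whereas showcasing that identity is the point of the paper. If you want to stay within the paper's spirit, you would need to supply the inductive proof of $L_s$ you allude to, which would bring your argument closer in length to the original.
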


\begin{proof}
We proceed by induction on $m$. For $m=1$, the contributions to $Q_n(1)$ are given by a number of separate trees, each connected to a vertex of $V_1$. The number of trees on a subset $I_j \subset \{1,\dots,n\}$ of the additional vertices is $\vert I_j\vert^{\vert I_j\vert -2}$, each contributing a factor $a^{\vert I_j\vert -1}$ since $\vert I_j\vert -1$ is the number of edges in the tree, and there is an additional factor $\vert I_j\vert $ due to the choice of vertex connecting to a point of $V_1$. Hence
\begin{eqnarray*}
Q_n(1) &=& \sum_{p=1}^n \sum_{\{I_1,\dots, I_{p}\} \in \Pi_p(\{1,\dots,n\})} (\lambda \vert V_1\vert )^p \prod_{j=1}^p \left(a\vert I_j\vert\right)^{\vert I_j\vert -1} \\ &=& \sum_{p=1}^n \left(\lambda \vert V_1\vert \right)^p {n-1 \choose p-1} (a n)^{n-p} = \lambda \vert V_1\vert  (\lambda \vert V_1\vert  + n
a)^{n-1}.
\end{eqnarray*}
For $m\geq 2$ we define ${\tilde Q}_n(m)$ to be the contribution of forests such that there is no edge between vertices of different sets $V_i$. We claim that
\begin{equation}
\label{Qtilde}
{\tilde Q}_n(m) = {\tilde Q}_n(V_{1};\dots; V_{m}) =\lambda \vert V_1\vert \left(\prod_{i=2}^m \rho_i(\lambda)\right) n^{m-1} (\lambda  \vert V\vert  + n a)^{n-1}.
\end{equation}
Assuming this, we can complete the induction. Indeed, we can subdivide the set $\{1,\dots,m\}$ into subsets
$J_1,\dots,J_q$ such that the $V_i$ with $i \in J_k$ are connected. These forests contribute factors of the form
\begin{equation*}
\vert V_{j_k}\vert \left(\prod_{j \in J_k \setminus \{j_k\}} \rho_j(\lambda)\right) \left( \vert V_{J_k}\vert  \right)^{\vert J_k\vert -2},
\end{equation*}
where $j_k$ is the first vertex in $J_k$ and $j_1=1$. This is analogous to the first application, see Theorem \ref{thm2}. The sets $V_{J_k}$ can now be considered as single sets in a reduced forest (except that an additional vertex is connected to a single $V_i$, i.e. $\rho_{J_k}$ is replaced by $\rho_{j_k}$). Denote by $Q_n(\{J_k\}_{k=1}^q)$ the corresponding contribution. By the claim \eqref{Qtilde}, this contribution equals
\begin{eqnarray*}
Q_n(\{J_k\}_{k=1}^q) &=& \vert V_1\vert  \left(\prod_{j \in J_1 \setminus \{1\}} \rho_j(\lambda)\right) \vert V_{J_1}\vert ^{\vert J_1\vert -2}
\tilde{Q}_n(V_{J_1}; \dots;V_{J_q}) \\ && \times \sum_{j_2 \in J_{2}, \dots, j_q \in J_q} \prod_{k=2}^q \left\{ \vert V_{j_k}\vert  \left(
\prod_{j \in J_k \setminus \{j_k\}} \rho_j(\lambda) \right) \vert V_{J_k}\vert ^{\vert J_k\vert -2} \right\} \\ &=& \lambda
\vert V_1\vert  \left(\prod_{j=2}^m \rho_{j}(\lambda)\right) \left(\prod_{k=1}^q \vert V_{J_k}\vert ^{\vert J_k\vert -1}\right) n^{q-1} (\lambda \vert V\vert  + a n)^{n-1}.
\end{eqnarray*}
Summing over the subdivisions $\{J_k\}_{k=1}^q$ yields
\begin{equation*}
\sum_{\{J_k\}_{k=1}^q} Q_n(\{J_k\}_{k=1}^q) = \lambda \vert V_1\vert  \left(\prod_{j=2}^m \rho_j(\lambda)\right) {m-1 \choose q-1} \vert V\vert ^{m-q} n^{q-1} (\lambda \vert V\vert  + a n)^{n-1}.
\end{equation*}
Identity \eqref{Qnm} then follows by summing over $q=1,\dots,m$. To conclude the proof of Theorem \ref{thm3}, it remains to show \eqref{Qtilde}. The proof essentially relies on the rewriting of ${\tilde Q}_n(m)$ in the form
\begin{eqnarray}
\label{Qtilderesult}
{\tilde Q}_n(m) &=& \lambda \vert V_1\vert \left(\prod_{i=2}^m \rho_i(\lambda)\right)
\sum_{q=1}^{m-1} \sum_{\{J_1,\dots, J_{q}\}\in \Pi_q(\{2,\dots,m\})} \sum_{p=q}^n \sum_{\{I_{1},\dots, I_{p}\} \in \Pi_p(\{1,\dots,n\})} \nonumber
\\ && \times \sum_{(j_k)_{k=1}^{q} \in \mathbb{N}_{p,\neq}^q} \left(\prod_{k=1}^q \vert I_{j_k}\vert ^{\vert J_k\vert }\right) (\lambda \vert V\vert)^{p-1}\prod_{j=1}^p (a
\vert I_j\vert )^{\vert I_j\vert -1}.
\end{eqnarray}
Here, $\mathbb{N}_{p,\neq}^q$ denotes the set of ordered $q$-tuples in $\{1,\dots,p\}$ where each pair is
unequal, i.e.
\begin{equation*}
\mathbb{N}_{p,\neq}^{q} = \left\{(j_k)_{k=1}^q:\,j_k \in \{1,\dots,p\}, j_{k'} \neq j_k \,(k\neq k')\right\}.
\end{equation*}
The expression \eqref{Qtilde} then follows from the following version of the multinomial formula:
\begin{equation*}
\sum_{q=1}^{\min(m-1,n)} \sum_{\{J_1,\dots,J_q\}\in \Pi_q(\{1,\dots,m-1\})} \sum_{(j_k)_{k=1}^{q} \in \mathbb{N}_{p,\neq}^q}  \prod_{k=1}^q x_{j_k}^{\vert J_{k}\vert } = \left(\sum_{j=1}^n x_j \right)^{m-1}.
\end{equation*}
For reader's convenience, the proof of \eqref{Qtilderesult} is postponed to Sec. \ref{proofeq}.
\end{proof}

\section{Proof of Theorem \ref{thm}.}
\label{app}

The proof is done by induction on $m$ and $p$. Note that, for $m=p$ both sides are equal to 1 and for $p=1$ both sides are equal to $\left(\sum_{i=1}^m x_i \right)^{m-1}$. Now assume that identity \eqref{fdeq1} holds true for a given $m \geq 1$ and all $p \leq m$. Let $\mathcal{L}_{m}$ and $\mathcal{R}_{m}$ denote the left-hand side and right-hand side of \eqref{fdeq1} respectively. We may assume that $1 \in I_1$ and expand the factor $\left( \sum_{i \in I_1} x_i \right)^{\vert I_1\vert-1}$ by the binomial formula in powers of $x_1$
\begin{equation*}
\left(\sum_{i \in I_1} x_i \right)^{\vert I_1\vert-1} = \sum_{n=0}^{\vert I_1 \vert-1} {\vert I_1 \vert -1 \choose n} x_1^n \left( \sum_{i \in I_1 \setminus \{1\}} x_i \right)^{\vert I_1 \vert-1-n}.
\end{equation*}
Inserting this into $\mathcal{L}_{m+1}$ and denoting $\tilde{I}_1 := I_1 \setminus \{1\}$, we have
\begin{equation*}
\mathcal{L}_{m+1} =  \sum_{n=0}^{m+1-p} x_1^n \sum_{\substack{\{I_{1},\dots,I_{p}\} \in  \Pi_p(\{1,\dots,m+1\}) \\ 1 \in I_{1},\, \vert I_1\vert \geq n+1}} {\vert \tilde{I}_1 \vert \choose n} \left(\sum_{i \in \tilde{I}_{1}} x_i\right)^{\vert \tilde{I}_{1}\vert - n} \prod_{j=2}^{p} \left(\sum_{i \in I_{j}} x_i \right)^{\vert I_{j}\vert - 1}.
\end{equation*}
If $n=0$, we separate out $\tilde{I}_1=\emptyset$, for which $\{I_2,\dots,I_p\} \in
\Pi_{p-1}(\{2,\dots,m+1\})$. If $\tilde{I}_1 \neq \emptyset$ then $\{\tilde{I}_1,I_2,\dots,I_p\} \in
\Pi_p(\{2,\dots,m+1\})$. Conversely, given a partition $\{\tilde{I}_1,\tilde{I}_2,\dots,\tilde{I}_p\} \in
\Pi_p(\{2,\dots,m+1\})$ we obtain a unique partition of $\{1,\dots,m+1\}$ by adding $1$ to any of the sets
$\tilde{I}_{l}$ with $l \in \{1,\dots,p\}$. We can therefore write
\begin{equation*}
\begin{split}
\mathcal{L}_{m+1} =& \sum_{\{I_{2},\dots,I_{p}\} \in \Pi_{p-1}(\{2,\dots,m+1\})} \prod_{j=2}^{p} \left(\sum_{i \in I_{j}} x_i \right)^{\vert I_{j}\vert - 1}  \\ &+ \sum_{n=0}^{m+1-p} x_1^n \sum_{l=1}^p
\sum_{\substack{\{\tilde{I}_{1},\dots,\tilde{I}_{p} \} \in \Pi_{p}(\{2,\dots,m+1\}) \\ \vert \tilde{I}_{l}\vert \geq n}} {\vert
\tilde{I}_{l} \vert \choose n} \left(\sum_{i \in \tilde{I}_{l}} x_i \right)^{\vert
\tilde{I}_{l}\vert - n} \prod_{\substack{j=1 \\ j\neq l}}^{p} \left( \sum_{i \in \tilde{I}_{j}} x_i \right)^{\vert\tilde{I}_{j}\vert - 1}.
\end{split}
\end{equation*}
Note that in the second term on the right-hand side, $\vert \tilde{I}_{l}\vert \neq 0$ since $\{\tilde{I}_{1},\dots,\tilde{I}_{p} \} \in \Pi_{p}(\{2,\dots,m+1\})$.
Expanding the quantity $\mathcal{R}_{m+1}$ (the right-hand side of \eqref{fdeq1} but with $m+1$) in powers of $x_{1}$ and then replacing $\{2,\dots,m+1\}$ by $\{1,\dots,m\}$, it follows that it suffices to prove the equivalent identities
\begin{multline}
\label{claim0}
\sum_{\{I_{1},\dots,I_{p-1}\} \in \Pi_{p-1}(\{1,\dots,m\})} \prod_{j=1}^{p-1} \left(\sum_{i \in I_{j}} x_i \right)^{\vert I_{j}\vert - 1} \\ + \sum_{l=1}^{p}
\sum_{\{\tilde{I}_{1},\dots,\tilde{I}_{p}\} \in \Pi_p(\{1,\dots,m\})} \left(\sum_{i \in \tilde{I}_{l}} x_i
\right)^{\vert \tilde{I}_{l}\vert} \prod_{\substack{j=1 \\ j\neq l}}^{p} \left( \sum_{i \in \tilde{I}_{j}} x_i \right)^{\vert \tilde{I}_{j}\vert - 1} = {m \choose p-1} \left(\sum_{i=1}^{m} x_i \right)^{m+1-p}
\end{multline}
for $n=0$, and
\begin{multline}
\label{claim}
\sum_{l=1}^p \sum_{\substack{\{\tilde{I}_{1},\dots,\tilde{I}_{p}\} \in \Pi_p(\{1,\dots,m\}) \\ \vert\tilde{I}_{l}\vert \geq
n}} {\vert \tilde{I}_{l} \vert \choose n} \left( \sum_{i \in \tilde{I}_{l}} x_i \right)^{\vert
\tilde{I}_{l}\vert - n} \prod_{\substack{j=1 \\ j\neq l}}^{p} \left( \sum_{i \in \tilde{I}_{j}} x_i \right)^{\vert \tilde{I}_{j}\vert - 1} \\ = {m \choose p-1} {m + 1 - p \choose n} \left(\sum_{i=1}^{m} x_i \right)^{m+1-p-n}
\end{multline}
for all $1 \leq n \leq m+1-p$.
We start with the case of $n=0$. By the induction hypothesis, the first term on the left-hand side of \eqref{claim0} is equal to
\begin{equation*}
{m-1 \choose p-2} \left( \sum_{i=1}^m x_i \right)^{m+1-p}.
\end{equation*}
The second term on the left-hand side of \eqref{claim0} can be rewritten as
\begin{equation*}
\sum_{\{\tilde{I}_{1},\dots,\tilde{I}_{p}\} \in \Pi_p(\{1,\dots,m\})}  \prod_{j=1}^{p} \left( \sum_{i \in \tilde{I}_{j}} x_i \right)^{\vert \tilde{I}_{j}\vert - 1} \sum_{l=1}^p \left(\sum_{i \in \tilde{I}_{l}} x_i \right)= {m-1 \choose p-1} \left( \sum_{i=1}^{m} x_i \right)^{m+1-p},
\end{equation*}
again by the induction hypothesis. Adding the two contributions for $n=0$ yields the right-hand side of \eqref{claim0}. For the case $n=1$, the left-hand side of \eqref{claim} can simply be rewritten as
\begin{equation*}
\begin{split}
\sum_{\{\tilde{I}_{1},\dots,\tilde{I}_{p}\} \in \Pi_p(\{1,\dots,m\})} \left(\sum_{l=1}^p \vert \tilde{I}_{l} \vert\right) \prod_{j=1}^{p} \left(\sum_{i \in \tilde{I}_{j}} x_i \right)^{\vert \tilde{I}_{j}\vert - 1}  &= m {m-1 \choose p-1}  \left( \sum_{i=1}^m x_i \right)^{m-p} \\
&= {m \choose p-1} (m+1-p) \left( \sum_{i=1}^m x_i \right)^{m-p}.
\end{split}
\end{equation*}
Next, we prove the case where $2 \leq n \leq m+1-p$. The key idea is to apply the derivation operator $\sum_{k=1}^{m} \frac{\partial^{n-1}}{\partial x_{k}^{n-1}}$ to the left-hand side of \eqref{fdeq1}. This gives
\begin{multline}
\label{appder}
\sum_{k=1}^{m} \frac{\partial^{n-1}}{\partial x_{k}^{n-1}} \sum_{\{I_{1},\dots,I_{p}\} \in \Pi_p (\{1,\dots,m\})} \prod_{j=1}^{p} \left( \sum_{i \in I_{j}} x_i \right)^{\vert I_{j}\vert-1} \\= \sum_{j'=1}^{p}
\sum_{\{I_{1},\dots,I_{p}\} \in \Pi_p(\{1,\dots,m\})} \prod_{\substack{j=1 \\ j\neq j'}}^{p} \left(\sum_{i \in I_{j}} x_i\right)^{\vert I_{j}\vert-1} \sum_{k \in I_{j'}} \frac{\partial^{n-1}}{\partial x_{k}^{n-1}} \left(\sum_{i\in I_{j'}} x_i \right)^{\vert I_{j'}\vert -1}.
\end{multline}
The $j'$ term on the right-hand side of \eqref{appder} is equal to zero unless $\vert I_{j'}\vert \geq n$. In that case,
\begin{equation*}
\sum_{k \in I_{j'}} \frac{\partial^{n-1}}{\partial x_{k}^{n-1}} \left(\sum_{i\in I_{j'}} x_i \right)^{\vert I_{j'}\vert -1} = \vert I_{j'}\vert \prod_{r=1}^{n-1} (\vert I_{j'}\vert-r) \left( \sum_{i \in I_{j'}} x_i \right)^{\vert I_{j'}\vert-n},
\end{equation*}
independently of $k \in I_{j'}$.  Hence, the left-hand side of \eqref{appder} can be rewritten as
\begin{equation*}
\sum_{j'=1}^p \sum_{\substack{\{I_{1},\dots,I_{p}\} \in \Pi_p(\{1,\dots,m\}) \\ \vert I_{j'}\vert \geq n}} \prod_{r=0}^{n-1} (\vert I_{j'}\vert-r) \left(\sum_{i\in I_{j'}} x_i \right)^{\vert I_{j'}\vert -n} \prod_{\substack{j=1 \\ j\neq j'}}^{p} \left(\sum_{i \in I_{j}} x_i\right)^{\vert I_{j}\vert-1}, \end{equation*}
which is nothing but $n!$ times the left-hand side of \eqref{claim}.  On the other hand, applying the derivation operator $\sum_{k=1}^{m} \frac{\partial^{n-1}}{\partial x_{k}^{n-1}}$ to the right-hand side of \eqref{fdeq1} gives
\begin{equation*}
m {m-1 \choose p-1} \prod_{r=0}^{n-2} (m-p-r) \left( \sum_{i=1}^{m} x_i \right)^{m+1-p-n} = n! {m \choose p-1} {m+1-p \choose n} \left( \sum_{i=1}^{m} x_i \right)^{m+1-p-n},
\end{equation*}
which is just $n!$ times the right-hand side of \eqref{claim}.  This concludes the proof of \eqref{claim}, and hence also the proof of the theorem. \qed

\section{Appendix.}
\label{appex}

\subsection{Proof of \eqref{Qtilderesult}.}
\label{proofeq}

\begin{proof}
We proceed by induction on $m$. For $m=2$, formula \eqref{Qtilde} gives
\begin{equation}
{\tilde Q}_n(2) = \lambda \vert V_1\vert \rho_2(\lambda)
\sum_{p=1}^n \sum_{\{I_1,\dots,I_p\}\in \Pi_p(\{1,\dots,n\})} \sum_{j_1=1}^p \vert I_{j_1}\vert \, (\lambda \vert V\vert)^{p-1} \prod_{j=1}^p (a \vert I_j\vert )^{\vert I_j\vert -1},
\end{equation}
which reduces to \eqref{Qtilde} by Theorem \ref{thm} since $\sum_{j_1=1}^{p} \vert I_{j_1} \vert =n$.\\
For $m \geq 3$, consider first the case that all $V_1,\dots,V_{m-1}$ are already
connected by trees. Denote by ${\tilde Q}_n^{(1)}(m)$ the contribution to ${\tilde Q}_n(m)$ due to forest
graphs where $V_m$ is connected to a single subset $J_l$. We have by induction that this contribution to ${\tilde Q}_n(m)$ is given by
\begin{eqnarray*}
{\tilde Q}_n^{(1)}(m) &=& \lambda \vert V_1\vert \left(\prod_{i=2}^m \rho_i(\lambda)\right) \sum_{q=1}^{m-2}
\sum_{\{J_1,\dots,J_q\} \in \Pi_q(\{2,\dots,m-1\})}  \nonumber \\ && \times \sum_{p=q}^n \sum_{\{I_1,\dots,I_p\} \in \Pi_p(\{1,\dots,n\})} \sum_{(j_k)_{k=1}^q \in \mathbb{N}_{p,\neq}^q} \left(\prod_{k=1}^q \vert I_{j_k}\vert ^{\vert J_k\vert }\right)\left(\lambda \sum_{i-1}^{m-1} \vert V_i \vert\right)^{q-1}\nonumber \\
&& \times \left\{\sum_{l=1}^q  \vert I_{j_l}\vert  (\lambda \vert V\vert) + \sum_{j \notin \{j_1,\dots, j_q\}} \vert I_j\vert  \left(\lambda \sum_{i-1}^{m-1} \vert V_i \vert\right)
\right\} \nonumber \\ && \times (\lambda \vert V \vert)^{p-q-1} \prod_{j=1}^p  (a \vert I_j\vert )^{\vert I_j\vert -1}.
\end{eqnarray*}
Here, the first term in curly brackets corresponds to the case where $V_m$ is connected to the same tree as $J_l$, and the second term corresponds to the case where $V_m$ is connected to $J_l$ for some $l$ by a separate tree. Note that $p-q$ factors in \eqref{Qtilderesult} correspond to trees connected to a single $V_i$.\\
For the first term in curly brackets we set $q'=q$ and define $J'_k = J_k$ ($k=1,\dots,q$) if $k \neq l$ and $J'_l = J_l \cup \{m\}$, and for the second term we put $q'=q+1$ and $J'_k = J_k$ and $J_{q+1} = \{m\}$. Then $\{J'_k\}_{k=1}^{q'}$ is a subdivision of $\{2,\dots,m\}$ and we can write
\begin{eqnarray*}
{\tilde Q}_n^{(1)}(m) &=& \lambda \vert V_1\vert \left(\prod_{i=2}^m \rho_i(\lambda)\right)
\sum_{q'=1}^{m-1} \sum_{\{J_{1}',\dots, J_{q'}'\}\in \Pi_{q'}(\{2,\dots,m\})}  \nonumber \\ && \times \sum_{p=q'}^n \sum_{\{I_1,\dots,I_p\} \in \Pi_p(\{1,\dots,n\})} \sum_{(j_k)_{k=1}^{q'} \in \mathbb{N}_{p,\neq}^{q'}} \prod_{k=1}^q \vert I_{j_k}\vert ^{\vert J'_k\vert } \left(\lambda \sum_{i=1}^{m-1}
\vert V_i \vert\right)^{q'-1} \nonumber \\ && \times (\lambda \vert V \vert)^{p-q'} \prod_{j=1}^p (a \vert I_j\vert )^{\vert I_j\vert -1}.
\end{eqnarray*}
Next, consider the case where there are two groups $S_1, S_2 \subset \{1,\dots,m-1\}$ of $V_i$ ($i \in S_1$ resp. $i \in S_2$) which are connected by trees among themselves but not between each other. Then $S_1$ and $S_2$  have to be connected via $V_m$. A similar configuration is illustrated in Figure \ref{fig2} placed in Sec. \ref{grph}. If $\vert S_1\vert>1$ then $V_m$ can  be connected to $V_{S_1}$ in two ways: directly to one of the trees on $V_{S_1}$ or by a separate tree to one of the vertices of $V_{S_1}$. By induction we therefore have that
\begin{eqnarray*}
{\tilde Q}_n^{(2)}(m) &=& \lambda \vert V_1\vert \left(\prod_{i=2}^m \rho_i(\lambda)\right) \sum_{\{S_1,S_2\} \in \Pi_2(\{1,\dots,m-1\})} \sum_{q_1=1}^{\vert S_1\vert -1} \sum_{\{J_{1}',\dots, J_{q_{1}}'\} \in \Pi_{q_1}(S_1\setminus\{1\})}  \nonumber \\ && \times
\sum_{q_2=1}^{\vert S_2\vert }  \sum_{\{J_{1}'',\dots, J_{q_{2}}''\} \in \Pi_q(S_2)} (\lambda \vert V_m\vert)  (\lambda \vert V_{S_1}\vert )^{q_1-1} (\lambda \vert V_{S_2}\vert )^{q_2-1} \nonumber
\\ &&  \times \sum_{p=q_1+q_2}^n \sum_{\{I_1,\dots,I_p\} \in \Pi_p(\{1,\dots,n\})} \sum_{(j_k)_{k=1}^{q_1+q_2} \in \mathbb{N}_{p,\neq}^{q_1+q_2}} \left(\prod_{k=1}^{q_1} \vert I_{j_k}\vert ^{\vert J'_k\vert } \right) \left(\prod_{k=1}^{q_2} \vert I_{j_{q_1+k}}\vert ^{\vert J''_k\vert }\right) \nonumber \\ &&
\times
\left\{ (\lambda \vert V\vert)  \sum_{l=1}^{q_1} \vert I_{j_l}\vert  + (\lambda \vert V_{S_1}\vert) \sum_{j \notin \{j_1,\dots,j_{q_1+q_2}\}} \vert I_{j}\vert  \right\}
\nonumber \\ && \times (\lambda \vert V\vert)^{p-q_1-q_2-1} \prod_{j=1}^p (a \vert I_j\vert )^{\vert I_j\vert -1}.
\end{eqnarray*}
Note that if $\vert S_1 \vert =1$ then we set $q_1=0$ in which case the sums over $q_1$ and $\{J_1',\dots, J_{q_1}'\}$ are absent as well as the first term in curly brackets.
Now, for the first term in curly brackets, we put $q=q_1+q_2$, $J_k = J'_k$ for $k=1,\dots,q_1$ and $k \neq l$ and $J_l = J'_l \cup \{m\}$, and $J_{k+q_1} = J_k''$ for $k=1,\dots q_2$. This defines a subdivision of $\{2,\dots,m\}$ where the subset containing $m$ has at least two elements. Similarly, for the second term we put $q=q_1+q_2+1$ and $J_k=J_k'$ for $k=1,\dots,q_1$ and $J_{q_1+1} = \{m\}$, and $J_{q_1+1+k} = J_k''$ for $k = 1,\dots,q_2$. This is also a subdivision of $\{2,\dots,m\}$, where in this case $J_{q_1+1}=\{m\}$.
Conversely, suppose that $\{J_k\}_{k=1}^q$ is a subdivision of $\{2,\dots,m\}$. Let $J_l$ be the set containing $m$. Consider first the case where $\vert J_k\vert >1$ for all $k=1,\dots,q$. Then we define the subdivision $\{\tilde{J}_k\}_{k=1}^q$ with $\tilde{J}_k = J_k$ for $k \neq l$ and $\tilde{J}_l = J_l \setminus \{m\}$  and choose an arbitrary subdivision $\{R_1,R_2\}$ of $\{1,\dots,q\}$. We can assume $l \in R_1$ and define $J'_k = \tilde{J}_k$ for $k \in R_1$ and $J''_k = J_{k}$ for $k \in R_2$, and set $S_1= \{1\} \cup \bigcup_{k \in R_1} \tilde{J}_k$ and $S_2 = \bigcup_{k \in R_2} J_k$. The sum over subdivisions $\{R_1,R_2\}$ then reduces to
\begin{equation*}
\sum_{\{R_1,R_2\} \in \Pi_2(\{1,\dots,q\})} (\lambda \vert V_{S_1}\vert)^{q_1-1} (\lambda \vert V_{S_2}\vert )^{q_2-1} = (q-1) \left(\lambda \sum_{i=1}^{m-1} \vert V_i\vert\right)^{q-2},
\end{equation*}
where we used Theorem \ref{thm} with $x_i = \vert V_{\tilde{J}_i}\vert$ for $i \neq l$ and $x_l = \vert V_1\vert + \vert V_{\tilde{J}_l}\vert$.\\
The other extreme is when $\vert J_k\vert =1$ for all $k=1,\dots,q$. In that case we have $q=m-1$ and $J_k = \{k+1\}$. Only the second  term applies and we remove $J_l$ from the list to obtain a subdivision $\{\tilde{J}_k\}_{k=1}^{m-2}$ of $\{2,\dots,m-1\}$, where $\tilde{J}_k = \{k+1\}$. There are now two cases: either $S_1 = \{1\}$ and $J''_k = \{k+1\}$ for $k=1,\dots,m-2=q_2$, or $\vert S_1\vert  > 1$. Again choosing a subdivision $\{R_1,R_2\}$ of $\{1,\dots,m-1\}$, we assume $1 \in R_1$ and set $S_1=\{1\}$ if $R_1=\{1\}$ and $J''_k=\{k+1\}$ for $k=1,\dots,m-2=q_2$, and otherwise $S_1=R_1$ and
$S_2 = \bigcup_{k\in R_2} \{k-1\}$, and $J'_k=\{k\}$ for $k \in R_1$ and $J''_k = \{k-1\}$ for $k \in R_2$. In the latter case $q_1=\vert R_1\vert -1$ and $q_2=\vert R_2\vert $.  The sum over subdivisions $\{R_1,R_2\}$ becomes
\begin{equation*}
\sum_{\{R_1,R_2\} \in \Pi_2(\{1,\dots,q\})} (\lambda \vert V_{S_1}\vert)^{q_1} (\lambda \vert V_{S_2}\vert)^{q_2-1} = (m-2) \left (\lambda \sum_{i=1}^{m-1} \vert V_i\vert \right)^{m-3}.
\end{equation*}
In the general case we can have either $\vert J_l\vert =1$ or $\vert J_l\vert  > 1$. If $\vert J_l\vert  > 1$, this corresponds to the first term in curly brackets and we define the subdivision $\{\tilde{J}_k\}_{k=1}^q$ with $\tilde{J}_k = J_k$ for $k \neq l$ and $\tilde{J}_l = J_l \setminus \{m\}$ and  choose an arbitrary subdivision $\{R_1,R_2\}$ of $\{1,\dots,q\}$. We can assume $l \in R_1$. Then we set $\{J'_k\} = \{\tilde{J}_k\}_{k \in R_1}$ and $\{J''_k\} = \{\tilde{J}_k\}_{k \in R_2}$. If $\vert J_l\vert =1$ then we eliminate $J_l$ and set $\{J'_k\} = \{J_k\}_{k \in R_1 \setminus \{l\}}$ and $\{J''_k\} =
\{J_k\}_{k \in R_2}$. In both cases, the sum over subdivisions $\{R_1,R_2\}$ then reduces to
\begin{equation*}
\sum_{\{R_1,R_2\} \in \Pi_2(\{1,\dots,q\})} (\lambda \vert V_{S_1}\vert)^{q_1-1} (\lambda \vert V_{S_2}\vert)^{q_2-1} = (q-1) \left(\lambda \sum_{i=1}^{m-1} \vert V_i\vert \right)^{q-2},
\end{equation*}
where we used Theorem \ref{thm} with $x_i = \vert V_{\tilde{J}_i}\vert$ for $i \neq l$ and $x_l = \vert V_1 + V_{\tilde{J}_l}\vert$. We conclude that
\begin{eqnarray*}
{\tilde Q}_n^{(2)}(m) &=& \lambda \vert V_1\vert \left(\prod_{i=2}^m \rho_i(\lambda)\right)
\sum_{q=2}^{m-1} (q-1) \sum_{\{J_1,\ldots,J_q\} \in \Pi_{q}(\{2,\dots,m\})}  \nonumber \\ && \times (\lambda \vert V_m \vert) \sum_{p=q}^n \sum_{\{I_1,\dots,I_p\} \in \Pi_p(\{1,\dots,n\})} \sum_{(j_k)_{k=1}^{q} \in \mathbb{N}_{p,\neq}^{q}} \left(\prod_{k=1}^{q} \vert I_{j_k}\vert ^{\vert J'_k\vert }\right)
\nonumber \\ && \times  \left(\lambda \sum_{i=1}^{m-1} \vert V_i\vert \right)^{q-2}
(\lambda \vert V \vert)^{p-q} \prod_{j=1}^p (a \vert I_j\vert )^{\vert I_j\vert -1}.
\end{eqnarray*}
It is now clear that the case where there are more groups $S_1,\dots,S_r$, $r \geq 3$ of subsets connected via $V_m$ is analogous. We again consider the set $J_{l}$ containing $m$ and distinguish the cases where $\vert J_l \vert$ is equal to or greater than 1. We map to subdivisions $\{R_1,\dots,R_r\}$ in the same way as in case $r=2$ and obtain a factor
\begin{equation*}
\sum_{\{R_1,\dots,R_r\} \in \Pi_2(\{1,\dots,q\})} (\lambda \vert V_{S_1}\vert)^{q_1-1} \dots (\lambda \vert V_{S_r}\vert)^{q_2-1} = {q-1 \choose r-1} \left(\lambda \sum_{i=1}^{m-1} \vert V_i\vert\right)^{q-r}.
\end{equation*}
The result is
\begin{eqnarray*}
{\tilde Q}_n^{(r)}(m) &=& \lambda \vert V_1\vert \left(\prod_{i=2}^m \rho_i(\lambda)\right)
\sum_{q=r}^{m-1} {q-1 \choose r-1} \sum_{\{J_1,\dots,J_{q}\} \in \Pi_{q}(\{2,\dots,m\})}  \nonumber \\ && \times (\lambda \vert V_m\vert)^{r-1} \sum_{p=q}^n \sum_{\{I_1,\dots,I_p\} \in \Pi_p(\{1,\dots,n\})} \sum_{(j_k)_{k=1}^{q} \in \mathbb{N}_{p,\neq}^{q}} \left(\prod_{k=1}^{q}
\vert I_{j_k}\vert ^{\vert J'_k\vert }\right) \nonumber \\ && \times  \left(\lambda \sum_{i=1}^{m-1} \vert V_i \vert\right)^{q-r}(\lambda \vert V\vert)^{p-q} \prod_{j=1}^p (a \vert I_j\vert )^{\vert I_j\vert -1}.
\end{eqnarray*}
Finally, summing over $r=1,\dots,m-1$ yields \eqref{Qtilderesult}.
\end{proof}

\subsection{Some illustrations.}
\label{grph}

\newpage
\begin{figure}[h!]
\centering
\includegraphics[width=12cm,height=12cm,keepaspectratio]{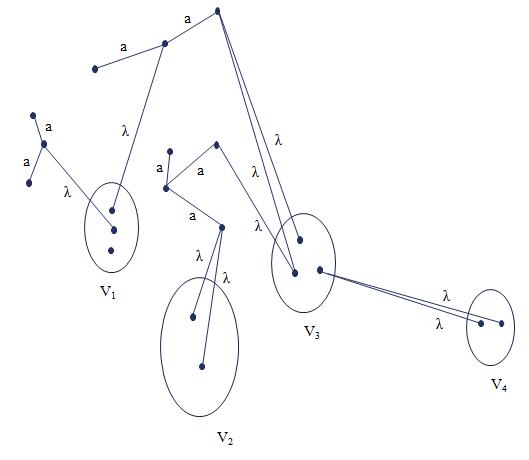}
\caption{A configuration with 4 sets of vertices and 10 additional single vertices.}
\label{fig1}
\end{figure}

\begin{figure}[h!]
\centering
\includegraphics[width=12cm,height=12cm,keepaspectratio]{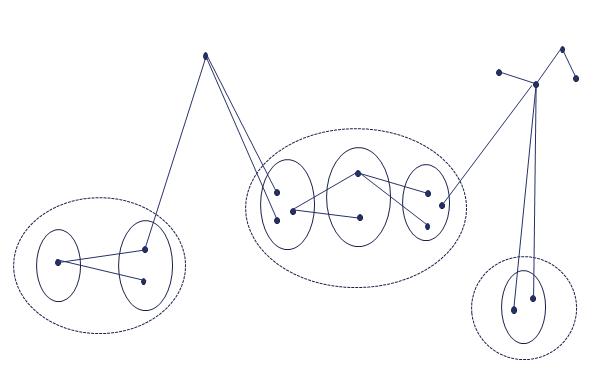}
\caption{2 groups of set of vertices and 1 single set of vertices connected to each other by trees only (i.e., not directly connected to each other).}
\label{fig2}
\end{figure}

\subsection*{Acknowledgment.}
The authors thank Adrien Kassel for having brought to their attention reference \cite{Stan}. The authors also thank Benjamin Hackl for having brought to their attention his paper \cite{HB} related to our main result Theorem \ref{thm}.
The second author gratefully acknowledges the financial support of the Ukrainian Scientific Project "III-12-16 Research of models of mathematical physics describing deterministic and stochastic processes in complex systems of natural science".

\small{
}

\end{document}